\numberwithin{equation}{section}
\newtheorem{Thm}{Theorem}[section]
\newtheorem{Prop}[Thm]{Proposition}
\newtheorem{Lem}[Thm]{Lemma}
\theoremstyle{definition}
\newcommand{\R}{\mathbb{R}}
\newcommand{\E}{{\rm E}}
\newcommand{\Del}{\Delta}
\newcommand{\eps}{\varepsilon}
\newcommand{\lam}{\lambda}
\renewcommand{\phi}{\varphi}
\renewcommand{\rho}{\varrho}
\title{A characterization of injective subsets in $\ell_\infty^n$}
\author{Dominic Descombes}
\date{October 14, 2015}
\begin{document}


\pdfbookmark[0]{\texorpdfstring{A characterization of injective subsets in \lowercase{$\ell^n_\infty$}}{A characterization of injective subsets in ℓⁿ͚}}{titleLabel}  
\maketitle

\begin{abstract}
We characterize all (absolute) $1$-Lipschitz retracts $Q$ of $\R^n$ with the maximum norm (denoted $\ell_\infty^n$). They coincide with the subsets written as
\begin{equation*}
Q=\{ x\in\ell_\infty^n \,\, | \,\, \underline{r\!}\,_i(\widehat{x}_i) \leq x_i
\leq
\bar{r}_i(\widehat{x}_i) \text{ for } i=1,\ldots,n \} 
\end{equation*}
where $\widehat{x}_i=(x_1,\!...,x_{i-1},x_{i+1},\!...,x_n)$ and $\underline{r\!}\,_i, \bar{r}_i :
\ell_\infty^{n-1} \rightarrow \R$ are $1$-Lip\-schitz maps with
$\underline{r\!}\,_i \leq\bar{r}_i $ everywhere (provided $Q\neq\emptyset$ and one is allowed to drop any inequality of $Q$).
These sets are also exactly the injective subsets; meaning those $Q\subset\ell_\infty^n$ such that every $1$-Lipschitz map $A\to Q$, defined on a subset $A$ of a metric space $B$, possesses a $1$-Lipschitz extension $B\to Q$.
\end{abstract}
 

\section{Preliminaries}

First we swiftly develop some basics about injective metric spaces needed later on. We adopt the notation of \cite{Lan} and refer thereon and to the original papers \cite{AroP,Isb,Dre} for a comprehensive picture and further results. Then we prove Proposition~\ref{Prop:CharInjSub} mentioned in the abstract.

Throughout this note, $\|\cdot\|$ refers to the supremum norm, and we set $\ell_\infty^n=(\R^n,\|\cdot\|)$ and $\ell_\infty(X)=(\ell_\infty(X),\|\cdot\|)$ where the latter space is composed of all real valued and bounded functions on an arbitrary index set $X$.
A metric space $X$ is {\em injective} if for every metric space $B$ and every 
$1$-Lipschitz map $\Phi \colon A \to X$ defined on a subet $A \subset B$ there exists a 
$1$-Lipschitz extension $\overline{\Phi} \colon B \to X$. Basic examples include the real line ($\overline{\Phi}(b):=\inf_{a\in A}(\Phi(a)+d(a,b))$ yields a maximal $1$-Lipschitz extension) and all $\ell_\infty$ spaces for arbitrary index sets (as the maps may be extended component-wise) and ($\R$-)trees. Also, every $1$-Lipschitz retract $X\subset Y$ of an injective space $Y$ is itself injective since $\Phi\colon A\to X$ may first be extended to $\overline{\Phi}\colon B\to Y$ and then concatenated with the retraction $\pi\colon Y\to X$. A metric space $X$ is called an {\em absolute $1$-Lipschitz retract} if for every isometric embedding $i\colon X\to Y$ into another metric space $Y$ there exists a retraction of $Y$ onto $i(X)$. Clearly, every injective space $X$ is an absolute $1$-Lipschitz retract since the identity on $Y$ restricted to $i(X)$ extends to a $1$-Lipschitz retraction. Conversely, every space $X$ embeds isometrically into $\ell_\infty(X)$ via the Kuratowski embedding $x\mapsto d_x-d_z$, where $d_x$ denotes the map $y\mapsto d(x,y)$, and is therefore injective if it is a $1$-Lipschitz retract. Thus these two notions coincide, and injective spaces are geodesic, contractible, and also never empty (as we allow $A=\emptyset\neq B$ in the definition above).

By a remarkable theorem of Isbell~\cite{Isb}, every metric 
space $X$ possesses an {\em injective hull} $\E(X)$ (unique up to isometry) which is minimal among injective spaces containing an isometric copy of $X$ in the following sense.
There is an isometric embedding ${\rm e}\colon X\to\E(X)$ with the following property. Whenever there is a metric space $Z$ and a $1$-Lipschitz map $h\colon\E(X)\to Z$ such that $h\circ {\rm e}$ is an isometric embedding, then $h$ is an isometric embedding as well.
However, we do not need this definition in the sequel and now 
review just as much of Isbell's construction $X\mapsto\E(X)$ as necessary for our purposes.
Given a metric space $X$, we denote by $\R^X$ the vector space of arbitrary 
real valued functions on $X$ and put
\[
\Del(X) := \{ f \in \R^X \,|\, 
\text{$f(x) + f(y) \ge d(x,y)$ for all $x,y \in X$}\}.
\]
A function $f \in \Del(X)$ is called {\em extremal\/} if there is no $g \le f$ in $\Del(X)$ 
other than $f$. The set $\E(X)$ can equivalently be written as
\[
\E(X) = \bigl\{ f \in \R^X \,|\, 
\text{$f(x) = \textstyle\sup_{y \in X}(d(x,y) - f(y))$ for all $x \in X$} 
\bigr\},
\]
thus $f$ is extremal if and only if for every $x$ and $\eps>0$ there is a $y$ such that
\begin{equation}\label{eq:extremal}
f(x)+f(y)\leq d(x,y)+\eps .
\end{equation}
Applying the equation defining the members of $\E(X)$ twice, we obtain
\[
f(x)-d(x,x')=\sup_{y\in X}(d(x,y)-d(x,x')-f(y))\leq f(x')
\]
for all $x,x'\in X$, so every extremal $f$ is $1$-Lipschitz. From the definition of $\Delta(X)$ we see that every function therein is non-negative everywhere. Thereof it follows easily that all the functions $d_x$ for $x\in X$ are extremal and, moreover, are the only extremal functions with zeros. 
Isbell then goes on to show that the set of extremal functions is in fact an injective hull of $X$. We will not demonstrate that here as we only need the following fact used in Lemma~\ref{Lem:A4}. If $X$ is injective, then the only extremal functions are $\{d_x\,|\,x\in X\}$. In order to show this, assume the existence of an extremal $f$ that is not equal to any $d_x$ for $x\in X$, in particular $f$ has no zero. Now let $\tilde X$ be the disjoint union of $X$ and a single point $x_f$. The metric $\tilde d$ on $\tilde X$ shall be $d(x,y)$ for $x,y\in X$ and $f(x)$ for the distance from $x$ to $x_f$ --- and the properties of a metric hold as $f\in\Delta(X)$ and $f$ is $1$-Lipschitz and positive. Next let $\overline{\Phi}\colon \tilde{X}\to X$ be a $1$-Lipschitz extension of the identity on $X$. So
\[
d(\overline{\Phi}(x_f),x) = d(\overline{\Phi}(x_f),\overline{\Phi}(x))\leq d(x_f,x)=f(x)
\]
for all $x\in X$. Hence $d_{\overline{\Phi}(x_f)}\leq f$, thus $f=d_{\overline{\Phi}(x_f)}$, a contradiction.


\section{The characterization}

For every element $x$ in $\ell_\infty^n$ let $\widehat{x}_i := (x_1,\ldots,x_{i-1},x_{i+1},\ldots,x_n)
\in \ell_\infty^{n-1}$ denote the vector $x$ but with the $i$-th coordinate
omitted. In the following we are using the $1$-Lipschitz map $(a,b,x)\mapsto\min\{\max\{a,x\},b\}$ from
$\ell_\infty^3$ to $\R$ quite frequently where $a\in\{-\infty\}\cup\R$, $b\in\R\cup\{\infty\}$ but we always assert that $a\leq b$. We write $p([a,b],x)$ for the value of this projection, i.e.\ the unique real number in the closed interval $\{y\in\R\,|\, a\leq y\leq b\}$ closest to $x$.

\begin{Lem}\label{Lem:A1} Let $\underline{r\!},
\bar{r} :
\ell_\infty^{n-1} \rightarrow \R$ be two $1$-Lipschitz maps with
$\underline{r\!}\leq\bar{r}$ (at every point in $\ell_\infty^{n-1}$).
Then the map
\[
\varphi:\ell_\infty^n \rightarrow \ell_\infty^n, \; \varphi(x):= \bigl( 
p([\underline{r\!}\,(\widehat{x}_1),\bar{r}(\widehat{x}_1)],x_1),x_2,\ldots,x_n \bigr)
\]
is a $1$-Lipschitz retraction to the hence
injective and non-empty subset
\[
Q := \left\{ x\in\ell_\infty^n \, | \, \underline{r\!}\,(\widehat{x}_1) \leq x_1
\leq
\bar{r}(\widehat{x}_1) \right\}
\]
with the property that $\widehat{\varphi(x)}_1 = \widehat{x}_1$. The lemma remains valid if we allow the (constant) function $\underline{r\!} = -\infty$ as the lower or $\bar{r} = \infty$ as the upper bound or both.
\end{Lem}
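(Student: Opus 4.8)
The plan is to verify directly that $\varphi$ is a retraction onto $Q$, isolate its $1$-Lipschitz continuity as the only substantial point, and then obtain injectivity for free from the preliminaries. First I would observe that $\varphi$ alters only the first coordinate, so $\widehat{\varphi(x)}_1=\widehat{x}_1$ holds by construction. Consequently the interval $[\underline{r}(\widehat{x}_1),\bar{r}(\widehat{x}_1)]$ into which the first coordinate is projected coincides with the interval attached to $\varphi(x)$ itself, and since $p([a,b],x_1)$ always lands in $[a,b]$ we conclude $\varphi(x)\in Q$. For $x\in Q$ the first coordinate already lies in its interval, so $p$ acts as the identity and $\varphi(x)=x$; thus $\varphi$ fixes $Q$ pointwise and is a retraction onto $Q$, and in particular $Q=\varphi(\ell_\infty^n)\neq\emptyset$.

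The heart of the argument is the $1$-Lipschitz estimate. For $i\geq 2$ we have $|\varphi(x)_i-\varphi(x')_i|=|x_i-x'_i|\leq\|x-x'\|$ trivially. For the first coordinate, write $a=\underline{r}(\widehat{x}_1)$, $b=\bar{r}(\widehat{x}_1)$, $a'=\underline{r}(\widehat{x'}_1)$, $b'=\bar{r}(\widehat{x'}_1)$. Since $\underline{r}$ and $\bar{r}$ are $1$-Lipschitz and dropping a coordinate only decreases the supremum norm, i.e.\ $\|\widehat{x}_1-\widehat{x'}_1\|\leq\|x-x'\|$, each of $|a-a'|$ and $|b-b'|$ is at most $\|x-x'\|$, and $|x_1-x'_1|\leq\|x-x'\|$ as well. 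The map $(a,b,x)\mapsto p([a,b],x)=\min\{\max\{a,x\},b\}$ is a composition of the operations $\max$ and $\min$, each $1$-Lipschitz from $\ell_\infty^2$ to $\R$, and hence is itself $1$-Lipschitz on $\{a\leq b\}\subset\ell_\infty^3$; applying this with the three bounds above yields
\[
|\varphi(x)_1-\varphi(x')_1|=|p([a,b],x_1)-p([a',b'],x'_1)|\leq\max\{|a-a'|,|b-b'|,|x_1-x'_1|\}\leq\|x-x'\|.
\]
Taking the maximum over all coordinates gives $\|\varphi(x)-\varphi(x')\|\leq\|x-x'\|$.

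With $\varphi$ established as a $1$-Lipschitz retraction, the injectivity of $Q$ is immediate: $\ell_\infty^n$ is injective, and by the preliminaries every $1$-Lipschitz retract of an injective space is injective. The extension to $\underline{r}=-\infty$ and/or $\bar{r}=\infty$ requires only minor bookkeeping: in these cases the projection degenerates to $\min\{x,b\}$, to $\max\{a,x\}$, or to the identity, and the corresponding endpoint term simply drops out of the estimate above, leaving the $1$-Lipschitz bound and hence all the other assertions intact.

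The step I expect to demand the most care is the joint $1$-Lipschitz continuity of the projection $p$, since it is tempting to treat the interval $[a,b]$ as fixed; the point is that the endpoints themselves move with $x$, so one must exploit that $\min$ and $\max$ are nonexpansive in all their arguments simultaneously rather than merely in the projected point. Once that composition principle is in hand, the remainder is routine coordinatewise estimation.
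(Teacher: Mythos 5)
Your proof is correct and follows exactly the route the paper intends: the paper's proof is the one-liner ``every component of $\varphi$ is $1$-Lipschitz, hence the whole map is and the rest is simple,'' relying on the joint $1$-Lipschitz continuity of $(a,b,x)\mapsto\min\{\max\{a,x\},b\}$ stated just before the lemma, which is precisely the composition principle you isolate and verify. Your write-up simply fills in the details (retraction property, coordinatewise estimates, injectivity via retracts of injective spaces, degenerate bounds) that the paper leaves to the reader.
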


\begin{proof} Every component of $\varphi$ is $1$-Lipschitz, hence the
whole map is and the rest is simple to prove as well.
\end{proof}

\begin{Lem}\label{Lem:A2} Let $\lambda\in [0,1)$ and for every
$i=1,\ldots,n$ let $\underline{r\!}\,_i, \bar{r}_i :
\ell_\infty^{n-1} \rightarrow \R$ be a pair of $\lambda$-Lipschitz maps with
$\underline{r\!}\,_i \leq\bar{r}_i $.
Then the subspace
\[
Q := \{ x\in\ell_\infty^n \, | \, \underline{r\!}\,_i(\widehat{x}_i) \leq x_i
\leq
\bar{r}_i(\widehat{x}_i) \text{ for } i=1,\ldots,n \}
\]
is injective --- in particular every such set of
inequalities is solvable. The lemma remains valid if any of the lower or upper bounds take the constant value $-\infty$ or $\infty$, respectively.
\end{Lem}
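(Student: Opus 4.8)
The plan is to promote the single-coordinate clamp of Lemma~\ref{Lem:A1} to a simultaneous clamp in all coordinates and then exploit the strict inequality $\lambda<1$ to make its iterates converge. Concretely, I would define $\varphi\colon\ell_\infty^n\to\ell_\infty^n$ by
\[
\varphi(x)_i := p\bigl([\underline{r\!}\,_i(\widehat{x}_i),\bar{r}_i(\widehat{x}_i)],x_i\bigr),\qquad i=1,\ldots,n.
\]
Exactly as in Lemma~\ref{Lem:A1}, each component is the $1$-Lipschitz projection precomposed with the map $x\mapsto(\underline{r\!}\,_i(\widehat{x}_i),\bar{r}_i(\widehat{x}_i),x_i)$, whose three entries are $\lambda$-, $\lambda$- and $1$-Lipschitz; hence every $\varphi(\cdot)_i$, and so $\varphi$ itself, is $1$-Lipschitz. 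Moreover $\varphi(x)=x$ holds precisely when $\underline{r\!}\,_i(\widehat{x}_i)\le x_i\le\bar{r}_i(\widehat{x}_i)$ for every $i$, i.e.\ the fixed-point set of $\varphi$ is exactly $Q$. Unlike the one-coordinate situation, a single application of $\varphi$ need not land in $Q$, since clamping one coordinate perturbs the bounds governing the others; this is where iteration and $\lambda<1$ must enter.

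The key step, and the one I expect to be the main obstacle, is to show that although $\varphi$ is only $1$-Lipschitz, its successive increments contract geometrically. Writing $x^{(0)}:=x$ and $x^{(k+1)}:=\varphi(x^{(k)})$, I would fix a coordinate $i$ and put $s':=x^{(k+1)}_i=p([a,b],x^{(k)}_i)$ with $a:=\underline{r\!}\,_i(\widehat{x}^{(k)}_i)$, $b:=\bar{r}_i(\widehat{x}^{(k)}_i)$, and likewise $a',b'$ for the bounds at step $k+1$. Since $s'\in[a,b]$, re-clamping with the \emph{unchanged} bounds would leave it fixed; thus all movement in $x^{(k+2)}_i=p([a',b'],s')$ stems from the change of the bounds. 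Checking the three cases ($s'$ lies in $[a',b']$, below $a'$, or above $b'$) and using $a\le s'\le b$ together with $|a'-a|,\,|b'-b|\le\lambda\|\widehat{x}^{(k+1)}_i-\widehat{x}^{(k)}_i\|$ gives
\[
|x^{(k+2)}_i-x^{(k+1)}_i|\le\lambda\,\|\widehat{x}^{(k+1)}_i-\widehat{x}^{(k)}_i\|\le\lambda\,\|x^{(k+1)}-x^{(k)}\| .
\]
Taking the maximum over $i$ yields $\|x^{(k+2)}-x^{(k+1)}\|\le\lambda\,\|x^{(k+1)}-x^{(k)}\|$. The same estimate survives verbatim when some bound is $\pm\infty$: that bound then equals $\pm\infty$ at every step, so the corresponding clamping case simply does not occur.

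From the geometric bound $\|x^{(k+1)}-x^{(k)}\|\le\lambda^k\|x^{(1)}-x^{(0)}\|$ and completeness of $\ell_\infty^n$, the sequence $(x^{(k)})$ is Cauchy and converges; I would define $\pi(x):=\lim_k\varphi^k(x)$. Continuity of $\varphi$ makes $\pi(x)$ a fixed point, so $\pi(x)\in Q$; in particular $Q\neq\es$, which already settles solvability of the system. Each iterate $\varphi^k$ is $1$-Lipschitz, so the pointwise limit satisfies $\|\pi(x)-\pi(y)\|=\lim_k\|\varphi^k(x)-\varphi^k(y)\|\le\|x-y\|$; and for $x\in Q$ every iterate equals $x$, whence $\pi(x)=x$. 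Thus $\pi\colon\ell_\infty^n\to Q$ is a $1$-Lipschitz retraction, and since $\ell_\infty^n$ is injective, $Q$ is an injective subset. No separate argument is needed for infinite bounds beyond the remark above.
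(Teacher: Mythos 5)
Your proof is correct, and it takes a genuinely different route from the paper's. You clamp \emph{all} coordinates simultaneously (a Jacobi-type iteration), defining $\varphi(x)_i = p\bigl([\underline{r\!}\,_i(\widehat{x}_i),\bar{r}_i(\widehat{x}_i)],x_i\bigr)$, and prove the one-step contraction $\|x^{(k+2)}-x^{(k+1)}\|\leq\lam\|x^{(k+1)}-x^{(k)}\|$; your three-case check is sound, the crucial point being that $x^{(k+1)}_i$ already lies in the \emph{old} interval, so any further movement is bounded by the $\lam$-Lipschitz drift of the endpoints. The paper instead composes the single-coordinate retractions $\varphi_1,\ldots,\varphi_n$ of Lemma~\ref{Lem:A1} cyclically (a Gauss--Seidel-type iteration) and derives the windowed recursion $|d_m|\leq\lam\max\{|d_{m-n+1}|,\ldots,|d_{m-1}|\}$, yielding the decay $D\lam^{(m-1)\%n}$, i.e.\ a factor $\lam$ only per block of $n$ steps; it then places the limit in $Q$ by noting that each slab $\varphi_i(\ell_\infty^n)$ is closed and contains a subsequence of the iterates, whereas you get membership more cleanly from continuity of $\varphi$ together with the observation that the fixed-point set of $\varphi$ is exactly $Q$. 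Your version buys a genuine per-iteration rate $\lam$, dispenses with the indexing bookkeeping ($[m]$, the increments $d_k$) and with the closedness argument, and handles the infinite bounds by the correct remark that the corresponding clamping case simply never occurs; the paper's version makes Lemma~\ref{Lem:A1} do the work directly --- each step is an exact retraction onto one slab --- and its sequential scheme connects naturally to the $\lam=1$ counterexample discussed right after the lemma, where the alternating projections fail to converge. Both arguments conclude identically: the limit map is a pointwise limit of $1$-Lipschitz maps fixing $Q$, hence a $1$-Lipschitz retraction of the injective space $\ell_\infty^n$ onto $Q$, which also settles solvability since $Q$ contains the image of $\pi$.
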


\begin{proof} Denote by $\varphi_1$ the map from the previous lemma
(applied to $\underline{r\!}\,_1,\bar{r}_1$) and analogously
$\varphi_2,\ldots\varphi_n$ for the other components. We define the following
sequence of $1$-Lipschitz maps
\[
\Phi_m :=
\varphi_k\circ\cdots\circ\varphi_2\circ\varphi_1\circ(\varphi_n\circ\cdots\circ\varphi_2\circ\varphi_1)\circ\cdots\circ(\varphi_n\circ\cdots\circ\varphi_2\circ\varphi_1) ,
\]
where $m$ denotes the number of concatenated maps on the right hand side. Formally $\Phi_0=\text{Id}_{\ell_\infty^n}$ and
$\Phi_{m}=\varphi_{[m]}\circ\Phi_{m-1}$, where
$[m]:=((m-1)\text{ mod }n)+1$.\newline We claim that the sequence $\Phi_m$
converges pointwise and for every $x\in\ell_\infty^n$ we have $\lim_{m\to\infty}\Phi_m(x) \in Q$. Then, since every ($1$-Lipschitz) $\Phi_m$ fixes $Q$, the pointwise limit $\Phi$ is a
$1$-Lipschitz retraction to $Q$ concluding the proof.\newline
In order to prove the claim, pick an arbitrary $x\in\ell_\infty^n$ and define $d_k$
to be a sequence of real numbers such that
\[
\Phi_m(x)=x+\sum_{k=1}^m d_k e_{[k]} \;\;\text{and consequently}\;\; |d_m| =
\|\Phi_m(x)-\Phi_{m-1}(x)\| ,
\]
where $e_{[k]}$ denotes the $[k]$-th vector of the standard basis. This
is possible since $\Phi_m(x)-\Phi_{m-1}(x) =
\varphi_{[m]}(\Phi_{m-1}(x))-\Phi_{m-1}(x)$ and the map $\varphi_{[m]}$ alters
nothing but the $[m]$-th coordinate. Now let $m>n$ and set
$i:=[m]$ as well as 
\[
a=\Phi_{m-n}(x), \; b=\Phi_{m-1}(x), \; c=\Phi_m(x) .
\]
We want an estimate for $|d_m|=\|c-b\|$. Since
$a=\varphi_i(\Phi_{m-n-1}(x))$, we have $\underline{r\!}\,_i(\widehat{a}_i)\leq
a_i\leq\bar{r}_i(\widehat{a}_i)$ and also
$b=a+\sum_{k=m-n+1}^{m-1} d_ke_{[k]}$, hence $b_i=a_i$ and
$\|b-a\|=\max\{|d_{m-n+1}|,|d_{m-n+2}|,\ldots,|d_{m-1}|\}$. We compare
the following two lines:
\begin{align*}
|d_m| & = |p([\underline{r\!}\,_i(\widehat{b}_i),\bar{r}_i(\widehat{b}_i)],b_i) - b_i| = |c_i-b_i| , \\
0 & = |p([\underline{r\!}\,_i(\widehat{a}_i),\bar{r}_i(\widehat{a}_i)],a_i) - a_i| .
\end{align*}
As $(p,q)\mapsto|\min\{p, \max\{ a_i,q\} \} - a_i|$ is a $1$-Lipschitz map
from $\ell_\infty^2$ to $\R$ and $\left\|\left(\underline{r\!}\,_i(\widehat{b}_i)-\underline{r\!}\,_i(\widehat{a}_i),\bar{r}_i(\widehat{b}_i)-\bar{r}_i(\widehat{a}_i)\right)\right\|\leq\lam\|b-a\|$, because $\underline{r\!}\,_i,\bar{r}_i$ are $\lam$-Lipschitz by assumption, this yields
\begin{equation}\label{Eq:LemA3} |d_m| \leq \lambda\|b-a\| =
\lambda\max\{|d_{m-n+1}|,|d_{m-n+2}|,\ldots,|d_{m-1}|\} .
\end{equation}
To end the proof we set $D:=\max\{|d_1|,\ldots,|d_n|\}$ and get
\[ |d_m|\leq D \lambda^{(m-1) \% n} \]
(where $ \% $ denotes integer division, the floored value of
the real division) by induction using \eqref{Eq:LemA3}. So the convergence of $\Phi_m(x)$
is like that of a geometric series.
It remains to check that $\lim_{m\to\infty} \Phi_m(x) \in Q$. This
follows immediately from the fact that the sets $\varphi_i(\ell_\infty^n)
= \{ x\in\ell_\infty^n \, | \, \underline{r\!}\,_i(\widehat{x}_i) \leq x_i
\leq
\bar{r}_i(\widehat{x}_i)\}$ are closed and the subsequences
$k\mapsto\Phi_{i+kn}(x)$ (convergent to the same limit as $\Phi_m(x)$) lie completely in
these sets. So the limit lies in the intersection of all the sets
$\varphi_i(\ell_\infty^n)$ and this is exactly $Q$.
\end{proof}

The proof does not work with $\lambda=1$ since the
sequence $\Phi_m(x)$ need not be convergent as the example $n=2,\;
\underline{r\!}\,_1(x_2)=\bar{r}_1(x_2)=x_2,\; \underline{r\!}\,_2(x_1)=\bar{r}_2(x_1)=1+x_1$
shows.
Moreover, $Q$ can be empty. But even assuming $Q\neq\emptyset$ (change
$1+x_1$ to $-x_1$ in the example) the sequence can be divergent, and it is of no
help to subtract a convergent subsequence (which, in the case $Q\neq\emptyset$,
always exists).
Nevertheless, we can show the lemma to hold for
$\lambda\leq 1$ assuming $Q\neq\emptyset$.

Recall that for two non-empty subsets $A,B$ of some metric space $X$ the {\em Hausdorff distance} is the real value
\[
d_H(A,B):=\inf\{\eps \,|\, A\subset U_\eps(B), B\subset U_\eps(A) \} ,
\]
where $U_\eps(A)$ denotes the closed $\eps$-neighborhood of $A$. And this is a metric on the set of all closed, bounded, and non-empty subsets of a given metric space $X$.
For further information on this distance and the Gromov-Hausdorff convergence, we refer to \cite{BriH}.

\begin{Lem}\label{Lem:A3} For every
$i=1,\ldots,n$ let $\underline{r\!}\,_i, \bar{r}_i :
\ell_\infty^{n-1} \rightarrow \R$ be a pair of $1$-Lipschitz maps with
$\underline{r\!}\,_i \leq\bar{r}_i $.
Then the subspace
\[
Q := \{ x\in\ell_\infty^n \, | \, \underline{r\!}\,_i(\widehat{x}_i) \leq x_i
\leq
\bar{r}_i(\widehat{x}_i) \text{ for } i=1,\ldots,n \}
\]
is injective assuming either that all the maps $\underline{r\!}\,_i,
\bar{r}_i$ are bounded (hence again the system of inequalities is solvable
automatically) or that $Q$ is non-empty (requiring the existence of a
solution). Again, the lemma remains valid if any of the lower or upper bounds take the constant value $-\infty$ or $\infty$, respectively.
\end{Lem}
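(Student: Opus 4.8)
The plan is to deduce the borderline case $\lam=1$ from Lemma~\ref{Lem:A2} by approximation: replace the $1$-Lipschitz bounds by genuinely $\lam$-Lipschitz ones with $\lam<1$, obtain injective sets $Q_\lam$ together with the retractions furnished by Lemma~\ref{Lem:A2}, and let $\lam\to1$. Since the iteration $\Phi_m$ itself need not converge when $\lam=1$ (as the examples after Lemma~\ref{Lem:A2} show), the limit must be taken on the level of the retractions, not of the iteration. Throughout I fix a non-principal ultrafilter $\mathcal U$ on $\N$; the relevant sequences of retraction values stay in a fixed ball of the finite-dimensional space $\ell_\infty^n$, so their ultralimits exist, are unique, and inherit the $1$-Lipschitz property (the norm being continuous). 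I prove the bounded case first and then reduce the non-empty case to it.

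\emph{Bounded case.} Assume every $\underline r_i,\bar r_i$ is bounded. For $\lam\in[0,1)$ put
\[
\underline r_i^{\lam}:=\lam\,\underline r_i+(1-\lam)\inf\underline r_i,\qquad
\bar r_i^{\lam}:=\lam\,\bar r_i+(1-\lam)\sup\bar r_i .
\]
These are $\lam$-Lipschitz, satisfy $\underline r_i^{\lam}\le\bar r_i^{\lam}$, and by construction $\underline r_i^{\lam}\le\underline r_i$ and $\bar r_i^{\lam}\ge\bar r_i$, so the associated set $Q_\lam$ contains $Q$; moreover $\underline r_i^{\lam}\to\underline r_i$ and $\bar r_i^{\lam}\to\bar r_i$ uniformly as $\lam\to1$, the oscillations being finite. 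Lemma~\ref{Lem:A2} gives a $1$-Lipschitz retraction $\pi_\lam\colon\ell_\infty^n\to Q_\lam$, and $\pi_\lam$ fixes $Q$ because $Q\subseteq Q_\lam$. Choosing $\lam_k\to1$ I set $\Phi:=\lim_{\mathcal U}\pi_{\lam_k}$. Then $\Phi$ is $1$-Lipschitz and fixes $Q$; and for arbitrary $x$, writing $z:=\Phi(x)$ and $y^{(k)}:=\pi_{\lam_k}(x)\in Q_{\lam_k}$, the inequalities $\underline r_i^{\lam_k}(\widehat{y^{(k)}}_i)\le y^{(k)}_i\le\bar r_i^{\lam_k}(\widehat{y^{(k)}}_i)$ pass to the ultralimit—using $\lam_k\to1$, continuity of $\underline r_i,\bar r_i$, and $y^{(k)}\to z$—to give $\underline r_i(\hat z_i)\le z_i\le\bar r_i(\hat z_i)$, i.e.\ $z\in Q$. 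Hence $\Phi$ retracts $\ell_\infty^n$ onto $Q$, so $Q$ is a $1$-Lipschitz retract of the injective space $\ell_\infty^n$ and therefore injective (in particular non-empty, so the system is solvable).

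\emph{Non-empty case.} Now suppose only $Q\neq\es$; after translating by a solution we may assume $0\in Q$, so $\underline r_i(0)\le0\le\bar r_i(0)$ for every $i$. For $R>0$ clamp the bounds to $[-R,R]$ using the projection of the paper, setting $\underline s_i^{R}(\hat x_i):=p([-R,R],\underline r_i(\hat x_i))$ and $\bar s_i^{R}(\hat x_i):=p([-R,R],\bar r_i(\hat x_i))$. These are bounded and $1$-Lipschitz (a composition of $1$-Lipschitz maps), and $\underline s_i^{R}\le\bar s_i^{R}$ by monotonicity of the clamp. I claim the set they define is exactly $Q\cap[-R,R]^n$: any point of it has all coordinates in $[-R,R]$, so $\|\hat x_i\|\le R$, and the $1$-Lipschitz estimates $\underline r_i(\hat x_i)\le R$ and $\bar r_i(\hat x_i)\ge -R$ (which use $0\in Q$) force each clamp to act only on its outer side, whence $\underline r_i(\hat x_i)\le x_i\le\bar r_i(\hat x_i)$; the reverse inclusion is immediate from monotonicity of the clamp. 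By the bounded case just proved, each $Q\cap[-R,R]^n$ is injective, with a $1$-Lipschitz retraction $\rho_R\colon\ell_\infty^n\to Q\cap[-R,R]^n$, and these sets increase to $Q$. Taking $R_k\to\infty$ and $\Psi:=\lim_{\mathcal U}\rho_{R_k}$ yields, exactly as above, a $1$-Lipschitz map with $\Psi(x)\in\overline Q=Q$ for all $x$ (the values $\rho_{R}(x)$ already lie in the closed set $Q$) and $\Psi(q)=q$ for all $q\in Q$ (since $q\in Q\cap[-R,R]^n$ once $R\ge\|q\|$). Thus $\Psi$ retracts $\ell_\infty^n$ onto $Q$ and $Q$ is injective. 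Finitely many bounds equal to $\pm\infty$ cause no trouble, as the clamp turns them into the constants $\mp R$.

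\emph{Main obstacle.} The genuine difficulty is the unbounded/$\infty$ case: no $\lam$-Lipschitz function can stay below (resp.\ above) and uniformly approximate a $1$-Lipschitz function that is unbounded below (resp.\ above), so the rescaling used in the bounded case breaks down and one cannot maintain $Q\subseteq Q_\lam$. I expect the decisive step to be the observation that clamping to $[-R,R]$ reproduces $Q\cap[-R,R]^n$ \emph{exactly} once a solution is normalized to $0$; this trades the problematic $\lam\to1$ limit for the harmless exhaustion $R\to\infty$ and reduces everything to the bounded case. The remaining points—that each ultralimit map lands in $Q$ and fixes $Q$—are routine, relying only on $Q$ being closed and on the defining inequalities being stable under the limits taken.
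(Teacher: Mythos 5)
Your proof is correct, and its skeleton coincides with the paper's own: the same downscaling of the bounds to $\lam$-Lipschitz functions arranged so that $Q\subseteq Q_\lam$ (the paper takes $\lam_k(\bar{r}_i-u)+u$ and $\lam_k(\underline{r\!}\,_i-l)+l$, which is exactly your construction with $u=\sup\bar{r}_i$, $l=\inf\underline{r\!}\,_i$), and the same clamping identity $Q\cap B(0,R)=\{x \mid p([-R,R],\underline{r\!}\,_i(\widehat{x}_i))\le x_i\le p([-R,R],\bar{r}_i(\widehat{x}_i)) \text{ for all } i\}$, valid after normalizing a solution to $0$, to reduce the merely non-empty case to the bounded one. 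Where you genuinely diverge is the limiting mechanism: the paper passes to the limit on the level of the \emph{sets}, showing $Q_k\to Q$ in Hausdorff distance and invoking Section~1.5 of \cite{Moe} (injectivity survives Gromov--Hausdorff limits), and for the unbounded case it invokes Corollary~1.19 of \cite{Moe} (a proper space is injective iff all its closed balls are); you instead pass to the limit on the level of the \emph{retractions}, taking pointwise ultralimits $\lim_{\mathcal U}\pi_{\lam_k}$ and $\lim_{\mathcal U}\rho_{R_k}$, where the uniform convergence of the rescaled bounds (their oscillation being finite) together with closedness of $Q$ forces the limit map to land in $Q$, while fixing $Q$ survives trivially. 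This buys self-containedness --- both citations to \cite{Moe} disappear --- at the modest cost of an ultrafilter, which in this equi-Lipschitz, pointwise-bounded, finite-dimensional setting could even be replaced by an Arzel\`a--Ascoli diagonal subsequence. Two small points to tighten, neither affecting soundness: (i) for the ultralimit of $\rho_{R_k}$ to exist you should record explicitly that $\rho_{R_k}(0)=0$ (as $0\in Q\cap[-R,R]^n$), whence $\|\rho_{R_k}(x)\|\le\|x\|$; your blanket ``fixed ball'' claim needs exactly this. (ii) Your bounded-case rescaling uses $\inf\underline{r\!}\,_i$ and $\sup\bar{r}_i$ and so does not literally cover a bounded system in which some bounds are the constants $\pm\infty$ (the sets $Q_\lam$ then fail to lie in a compact box); that configuration is absorbed by your non-empty case once one notes such a system is solvable, e.g.\ by replacing each $\infty$ bound with a finite constant majorizing the corresponding $\underline{r\!}\,_i$ and applying the all-bounded case.
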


\begin{proof} The bounded case first. Let $\lambda_k=1-1/k$ and $l$ be a lower
bound for the maps $\underline{r\!}\,_i$ and $u$ an upper bound for the maps
$\bar{r}_i$. Define $\bar{r}_i^k=\lambda_k(\bar{r}_i-u)+u$ and
$\underline{r\!}\,_i^k=\lambda_k(\underline{r\!}\,_i-l)+l$ and observe that the maps
$\underline{r\!}\,_i^k, \bar{r}_i^k$ for some fixed $k$ are all
$\lambda_k$-Lipschitz. Moreover, for every fixed $i$ the sequence
$\bar{r}_i^k$ ($\underline{r\!}\,_i^k$) is monotonically converging down (up) to $\bar{r}_i$ ($\underline{r\!}\,_i$) pointwise. Define the sets 
\[
Q_k := \left\{
x\in\ell_\infty^n \, \Big| \, \underline{r\!}\,_i^k(\widehat{x}_i) \leq x_i
\leq \bar{r}_i^k(\widehat{x}_i) \text{ for }
i=1,\ldots,n \right\}
\]
which are injective by the first proposition. And we
have $Q_1\supset Q_2\supset Q_3 \supset \cdots$ as well as $\bigcap_k Q_k = Q$
(this already implies $Q\neq\emptyset$ as all the $Q_k$ are compact). If we can
show that $Q_k$ converges to $Q$ w.r.t.\ Hausdorff distance (implying
Gromov-Hausdorff convergence), then $Q$ is injective --- either see Section~1.5 in \cite{Moe} or derive a direct
proof of this easy special case here. So assume for some $\epsilon>0$
we would have $d_\text{H}(Q_k,Q)>\epsilon$ for every $k$ (first only for infinitely
many $k$ but then for all by monotonicity of $Q_k$). Then taking a convergent sequence
$x_k\in Q_k\setminus U_\epsilon(Q)\neq\emptyset$ one immediately gets
$\lim_{k\to\infty} x_k\in \bigcap Q_k$ leading to a contradiction. (In fact,
this could also be derived from well-known theorems about Hausdorff distance.)
This ends the proof in the case of bounded maps

To reduce the case assuming $Q\neq\emptyset$ (with possibly unbounded $Q$) to the previous one we refer to Corollary~1.19 in \cite{Moe}. There it is shown that every proper metric space $Q$ is injective if and only if every closed ball in $Q$ is injective. Thereby we only need to verify that $Q\cap B(q,r)$ is injective for all $q\in Q$, may assume $q=0\in Q$ and have
\begin{align*}
 Q\cap B(0,r) & = \left\{ x \, | \,
 p([-r,\infty],\underline{r\!}\,_i(\widehat{x}_i)) \leq x_i
\leq p([-\infty,r],\bar{r}_i(\widehat{x}_i)) \text{ for all } i \right\}\\
 & = \left\{ x \, | \,
 p([-r,r],\underline{r\!}\,_i(\widehat{x}_i)) \leq x_i
\leq p([-r,r],\bar{r}_i(\widehat{x}_i)) \text{ for all } i \right\} .
\end{align*}
The last set above is injective by the bounded case, and we are left to show the second equality since the first one is clear. Clearly, all three sets are contained in $B(0,r)$. For a point $x$ in that ball, we have $\|\widehat{q}_i-\widehat{x}_i\|\leq r$ and $\underline{r\!}\,_i(\widehat{q}_i)\leq 0\leq \bar{r}_i(\widehat{q}_i)$ by our choice of $q$. Consequently, $\underline{r\!}\,_i(\widehat{x}_i)\leq r$, $-r\leq\bar{r}_i(\widehat{x}_i)$ and from this it is now obvious that the sets coincide.
\end{proof}

Now we turn to the converse that every injective subset of $\ell_\infty^n$ can be written as a set of $2n$ inequalities in the sense of the lemma above. We need a last definition before turning to the proof.
By a {\em cone} $C(p,x)$ in a metric space $X$ we mean the set
\[
\{ q\in X \,|\, d(p,q) = d(p,x)+d(x,q) \} .
\]
For cones of the form $C(x-e_i,x)$ or $C(x+e_i,x)$, where $e_i, i=1,\ldots,n$ are vectors of the standard basis of $\R^n$, we shorten the notation further to $C(x,+i):=C(x-e_i,x)$ and $C(x,-i):=C(x+e_i,x)$. With $\bot$ being the Euclidean orthogonality relation, we can equivalently write
\[
C(x,+i) = \{ x + be_i+y \,|\, b\geq 0, y\bot e_i, \|y\|\leq b \}.
\]

\begin{Lem}\label{Lem:A4} For every injective subset $Q\subset\ell_\infty^n$ there are
$n$ pairs of $1$-Lipschitz maps $\underline{r\!}\,_i, \bar{r}_i :
\ell_\infty^{n-1} \rightarrow \R$ ($i=1,\ldots,n$) with
$\underline{r\!}\,_i \leq\bar{r}_i $ such that
\[
Q = \{ x\in\ell_\infty^n \, | \, \underline{r\!}\,_i(\widehat{x}_i) \leq x_i
\leq
\bar{r}_i(\widehat{x}_i) \text{ for } i=1,\ldots,n \} .
\]
We allow for $\underline{r\!}\,_i=-\infty$ or $\bar{r}_i=\infty$ (or both) for any $i$ which is equivalent to drop some of the inequalities.
\end{Lem}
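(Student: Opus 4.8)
My plan is to write explicit formulas for the bounds, check the easy requirements by hand, and reduce the one nontrivial inclusion to the extension property of $Q$ in the same spirit as the preliminary paragraph on extremal functions. For each $i$ and each $w\in\ell_\infty^{n-1}$ I would first introduce the \emph{raw envelopes}
\[
\bar{s}_i(w):=\sup_{q\in Q}\bigl(q_i-\|\widehat{q}_i-w\|\bigr),\qquad
\underline{s}_i(w):=\inf_{q\in Q}\bigl(q_i+\|\widehat{q}_i-w\|\bigr),
\]
allowing the values $+\infty$, $-\infty$. Each is $1$-Lipschitz in $w$ wherever finite (the correction terms are $1$-Lipschitz), and inserting $q=x$ gives $\underline{s}_i(\widehat{x}_i)\le x_i\le\bar{s}_i(\widehat{x}_i)$ for every $x\in Q$. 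In the cone language introduced above, $\bar{s}_i(w)$ is the largest height $t$ for which the cone $C(\,\cdot\,,+i)$ based at the point with $\widehat{\cdot}_i=w$ and $i$-th coordinate $t$ still meets $Q$.

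The catch is that $\underline{s}_i$ may exceed $\bar{s}_i$ over columns that miss $Q$ (already for a single point $Q=\{0\}$ one gets $\bar{s}_i(w)=-\|w\|<\|w\|=\underline{s}_i(w)$), so these raw envelopes need not obey the required $\underline{r\!}\,_i\le\bar{r}_i$. I would repair this by clamping both to their midpoint $m_i:=\tfrac12(\underline{s}_i+\bar{s}_i)$, setting
\[
\bar{r}_i:=\max(\bar{s}_i,m_i),\qquad \underline{r\!}\,_i:=\min(\underline{s}_i,m_i)
\]
(with the evident convention that a coordinate whose raw envelope is $\pm\infty$ simply keeps that value, i.e.\ the inequality is dropped). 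These are again $1$-Lipschitz and now satisfy $\underline{r\!}\,_i\le\bar{r}_i$ everywhere; moreover, wherever the raw envelopes do \emph{not} cross — in particular at $w=\widehat{x}_i$ for each $x\in Q$, where $\bar{s}_i\ge x_i\ge\underline{s}_i$ — the clamp changes nothing, so $Q$ still lies in every slab. Thus both the inequality $\underline{r\!}\,_i\le\bar{r}_i$ and the inclusion $Q\subseteq\bigcap_i\{x:\underline{r\!}\,_i(\widehat{x}_i)\le x_i\le\bar{r}_i(\widehat{x}_i)\}$ hold.

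Everything then rests on the reverse inclusion. Given $x_0\notin Q$, I would extend the identity $Q\to Q$ over the single extra point $x_0$ — legitimate since $Q$ is injective, exactly as in the preliminary extremal-function argument — to obtain $q_0\in Q$ with $\|q_0-q\|\le\|x_0-q\|$ for all $q\in Q$. Writing $v=x_0-q_0$ and $\mu=\|v\|>0$, pick a coordinate $i$ with $|v_i|=\mu$; assuming $v_i=\mu$ (the case $v_i=-\mu$ being symmetric, via $\underline{r\!}\,_i$), maximality of $|v_i|$ says precisely $x_0\in C(q_0,+i)$, i.e.\ $\|\widehat{x_0}_i-\widehat{q_0}_i\|\le x_{0,i}-q_{0,i}=\mu$. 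The key estimate is $\bar{s}_i(\widehat{x_0}_i)<x_{0,i}$: no $q\in Q$ can have $q_i-\|\widehat{q}_i-\widehat{x_0}_i\|>x_{0,i}$, since then $\|x_0-q\|=q_i-x_{0,i}$ would force $q_{0,i}\ge x_{0,i}$, contradicting $q_{0,i}=x_{0,i}-\mu$, and a short limiting version of the same computation rules out equality. Feeding $q=q_0$ into $\underline{s}_i$ and using the cone inequality gives $\underline{s}_i(\widehat{x_0}_i)\le q_{0,i}+\mu=x_{0,i}$ as well, whence $m_i(\widehat{x_0}_i)<x_{0,i}$ and therefore $\bar{r}_i(\widehat{x_0}_i)<x_{0,i}$; that is, $x_0$ violates the $i$-th upper inequality. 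This gives $\bigcap_i\{\ldots\}\subseteq Q$ and finishes the proof.

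The step I expect to be the real obstacle is reconciling the two competing demands on the envelopes: the constraint $\underline{r\!}\,_i\le\bar{r}_i$ pushes each slab to be \emph{wide}, while tightness of the intersection wants it \emph{pinched} onto $Q$ — and indeed the two naive $1$-Lipschitz extensions (the largest and the smallest tube) respectively over- and under-shoot. The midpoint clamp is what threads this needle, and the genuine content lies in the cone estimate above, which certifies that pinching is harmless: every point outside $Q$ is \emph{strictly} undercut by the raw envelope in the coordinate singled out by its image $q_0$, so it is excluded by some slab irrespective of the clamp.
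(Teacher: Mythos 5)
Your proof is correct, but it takes a genuinely different route from the paper's. The paper works \emph{from the outside}: for each $x\notin Q$ it quantifies the failure of $d_x|_Q$ to be extremal by a number $\eps(x)>0$ with a near-witness $p_x\in Q$, erects a cone $C_x$ with apex shifted off $x$ by $a\eps(x)$ in the dominant coordinate of $x-p_x$, shows $C_x\cap Q=\emptyset$ for $a<1/4$, defines $\bar{r}_i$ and $\underline{r\!}\,_i$ as the infimum/supremum of the associated cone functions over all such outside points, and then --- this is its delicate step --- proves $\underline{r\!}\,_i\le\bar{r}_i$ by showing that two opposite cones cannot have intersecting interiors once $a<1/8$. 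You work \emph{from the inside}: your raw envelopes form the tightest $1$-Lipschitz tube spanned by $Q$ itself, the constraint $\underline{r\!}\,_i\le\bar{r}_i$ is enforced \emph{by construction} through the midpoint clamp (so nothing like the cone-intersection estimate is needed), and exclusion of $x_0\notin Q$ rests on the retraction point $q_0$ --- obtained by the same one-point extension device the paper uses only in its preliminaries --- via a direct norm computation; your clamp thus trades the paper's hardest estimate for a triviality, at the cost of bounds that are no longer built from cones avoiding $Q$. The one step you leave as a sketch, the ``limiting version'' ruling out equality, should be written out, and in fact no limit is needed: for every $q\in Q$, putting $w_0:=\widehat{(x_0)}_i$, $W:=\|\widehat{q}_i-w_0\|$ and $\del:=x_{0,i}-(q_i-W)$, the inequality $\|q_0-q\|\ge q_i-q_{0,i}=W-\del+\mu$ together with $\|q_0-q\|\le\|x_0-q\|=\max\{|W-\del|,W\}$ forces $\del\ge\mu$ when the maximum is $W$, rules out $\del<0$ outright (it would give $\mu\le0$), and gives $2\del\ge 2W+\mu$ when $\del>2W$; hence $\del\ge\mu/2$ in every case, so
\begin{equation*}
\bar{s}_i(w_0)\le x_{0,i}-\mu/2,\qquad \underline{s}_i(w_0)\le q_{0,i}+\mu=x_{0,i},\qquad m_i(w_0)\le x_{0,i}-\mu/4<x_{0,i},
\end{equation*}
which is the strict undercut your argument needs. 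Two small points worth recording for completeness: each raw envelope is either finite everywhere or infinite everywhere (the defining family is uniformly $1$-Lipschitz), so your convention for dropping inequalities is unambiguous; and the displayed bound shows $\bar{s}_i$ is automatically finite whenever the case $v_i=\mu$ actually occurs, so the conventions never interfere with the exclusion step.
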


\begin{proof} The injective subsets of $\ell_\infty^1=\R$ are exactly the closed intervals. Thereby the case $n=1$ is trivial or more a matter of declaring the convention $\widehat{x}_i=0\in\ell_\infty^0$, and so we assume $n\geq 2$. Since $Q$ is injective, only the distance functions $d_q$ for $q\in Q$ are extremal (as shown in the preliminaries). Therefore, given any point $x$ outside $Q$, the function $d_x|_Q$ is non-extremal. So we may assign to every such point the positive quantity
\begin{align*}
\eps & (x) := \\
 &\sup\{\eps \,|\, \text{there is }p\in Q \text{ with } \|x-p\|+\|x-q\|\geq \|p-q\|+\eps \text{ for all } q\in Q \} 
\end{align*}
(from choosing $q\in Q$ such that $\|x-q\|=d(x,Q)$ we see that $\eps(x)\leq 2 d(x,Q)$).
Moreover, for every $x$ let $p_x$ be such that $\|x-p_x\|+\|x-q\|\geq \|p_x-q\|+\eps(x)/2$ for every $q\in Q$. Next we select a cone $C_x$ for every $x\in\ell_\infty^n\setminus Q$. To that end, let $a\in\R$ be some positive number, the exact value will be determined in the course of the proof. Assume that the $i$-th coordinate of $x-p_x$ has maximal absolute value among all coordinates (if there are several such coordinates we simply choose one). Now set $C_x=C(x-a\eps(x)e_i,+i)$ if that value is positive and $C_x=C(x+a\eps(x)e_i,-i)$ if it is negative. Observe that $x\in\operatorname{Interior}(C_x)$ always. Assume that $Q\cap C_x$ contains a point $q$ and $C_x:=C(x-a\eps(x)e_i,+i)$ (the case $C_x:=C(x+a\eps(x)e_i,-i)$ works the same way). A straightforward computation yields $C(x,+i)\subseteq C(p_x,x)$ and hence
\[
\|p_x-(q+a\eps(x)e_i)\| = \|p_x-x\|+\|x-(q+a\eps(x)e_i)\|
\]
and consequently
\[
\|p_x-q\|\geq \|p_x-x\|+\|x-q\|-2a\eps(x).
\]
But this violates the definition of $p_x$ if we choose $a< 1/4$. We do so and have $Q\cap C_x = \emptyset$ for all $x\in\ell_\infty^n\setminus Q$. For every $i$, we define $\bar{r}_i$ to be the pointwise infimum over the family of $1$-Lipschitz functions $\ell_\infty^{n-1}\to\R; y\mapsto\|\widehat{x}_i-y\|+x_i-a\eps(x)$ where every $x$ such that $C_x=C(x-a\eps(x)e_i,+i)$ contributes exactly one member. If there is no such $x$, we let $\bar{r}_i:=\infty$. Similarly, $\underline{r\!}\,_i:=-\infty$ if there is no $x$ with $C_x=C(x+a\eps(x)e_i,-i)$ or otherwise the supremum over all functions $y\mapsto\|\widehat{x}_i-y\|+x_i+a\eps(x)$ for $x$ with $C_x=C(x+a\eps(x)e_i,-i)$. It is now not hard to deduce
\[
Q = \{ x\in\ell_\infty^n \, | \, \underline{r\!}\,_i(\widehat{x}_i) \leq x_i
\leq
\bar{r}_i(\widehat{x}_i) \text{ for } i=1,\ldots,n \}
\]
from $x\in\operatorname{Interior}(C_x)$ and $Q\cap C_x = \emptyset$. It remains to show $\underline{r\!}\,_i\leq\bar{r}_i$ for all pairs. First notice that $\underline{r\!}\,_i>\bar{r}_i$ at some point in $\ell_\infty^{n-1}$ implies there are points $x,y\in\ell_\infty^n\setminus Q$ with $C_x:=C(x-a\eps(x)e_i,+i)$, $C_y:=C(y+a\eps(y)e_i,-i)$ such that the intersection $\operatorname{Interior}(C_x)\cap\operatorname{Interior}(C_y)$ is not empty. To show that this can not happen for appropriate choice of $a$, we assume otherwise and start with the easy observation that the apex $x-a\eps(x)e_i$ of $C_x$ lies in $\operatorname{Interior}(C_y)$. Therefore $\tilde{x}:=x-a\eps(x)e_i-a\eps(y)e_i$ lies in $\operatorname{Interior}(C(y,-i))$ and the same holds for $\tilde{p_x}:=p_x-a\eps(x)e_i-a\eps(y)e_i$ as $p_x\in C(x,-i)$.
\begin{figure}[ht!]
\def\svgwidth{0.71\textwidth}
\vspace{0.1cm}
\hspace{1.8cm}
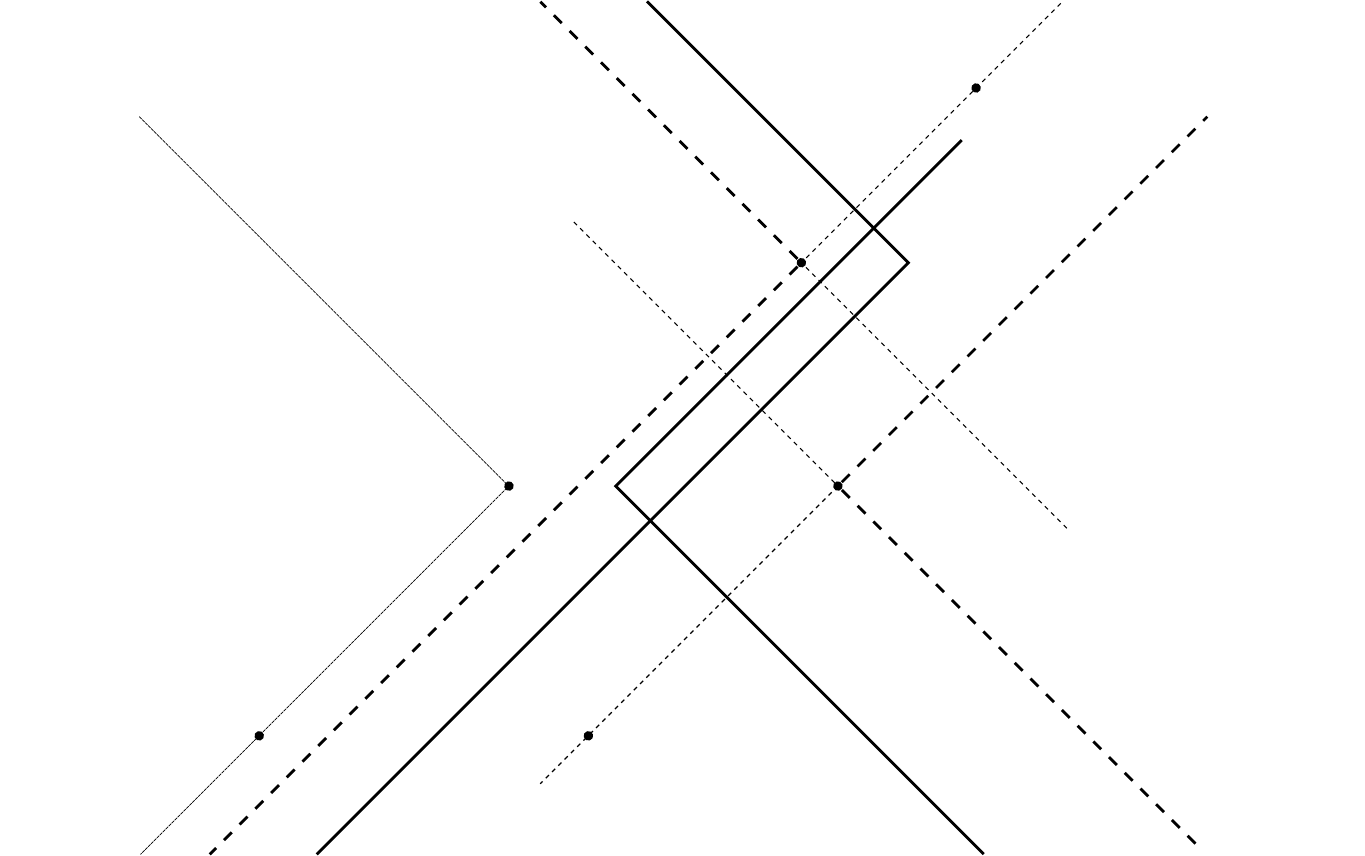
\caption{In this illustration $p_x,p_y$ are chosen in $\partial C(x,-1),\,\partial C(y,+1)$, respectively, since these are the critical cases.}
\end{figure}

\noindent So we have 
\begin{align*}
\|x-p_x\|+\|x-p_y\|&\leq \|\tilde{x}-\tilde{p_x}\|+\|\tilde{x}-p_y\|+a(\eps(x)+\eps(y)) \\
&= \|\tilde{p_x}-p_y\|+a(\eps(x)+\eps(y)) \\
& \leq \|p_x-p_y\|+2a(\eps(x)+\eps(y)),
\end{align*}
hence by definition of $p_x$ and $p_y$ this leads to $\eps(x)\leq 4a(\eps(x)+\eps(y))$ and $\eps(y)\leq 4a(\eps(x)+\eps(y))$, respectively. Now take $a<1/8$. The sum of the last two inequalities involving $\eps(x),\eps(y)$ then yields a contradiction proving that $\operatorname{Interior}(C_x)\cap\operatorname{Interior}(C_y)$ is in fact empty.
\end{proof}

All in all we arrive at the final proposition which summarizes the previous four lemmas.

\begin{Prop}\label{Prop:CharInjSub} A non-empty subset $Q\subset\ell_\infty^n$ is injective if and only if it can be written as a solution set of (at most $2n$) inequalities like 
\[
Q = \{ x\in\ell_\infty^n \, | \, \underline{r\!}\,_i(\widehat{x}_i) \leq x_i
\leq
\bar{r}_i(\widehat{x}_i) \text{ for } i=1,\ldots,n \} 
\]
where $\underline{r\!}\,_i, \bar{r}_i :
\ell_\infty^{n-1} \rightarrow \R$ ($i=1,\ldots,n$) are $1$-Lipschitz maps with
$\underline{r\!}\,_i \leq\bar{r}_i $ 
and one is allowed to drop any of these inequalities.
\end{Prop}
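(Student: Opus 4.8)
The plan is to assemble the statement directly from the four preceding lemmas, since the proposition is precisely the two-directional combination of their content. I would split the equivalence into its two implications and dispose of each by citing the appropriate lemma, taking care only that the non-emptiness hypotheses line up at the seams.

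For the ``if'' direction, suppose $Q$ has the stated form with $1$-Lipschitz maps $\underline{r}_i \le \bar{r}_i$ (allowing the values $-\infty$ and $\infty$, i.e.\ dropped inequalities) and $Q \neq \emptyset$. Then I would simply invoke Lemma~\ref{Lem:A3} in its ``$Q$ non-empty'' case, which asserts exactly that such a $Q$ is injective. Nothing further is needed here: the entire analytic burden of this direction — the approximation of the $1$-Lipschitz bounds by $\lambda$-Lipschitz ones with $\lambda<1$, the resulting Hausdorff convergence $Q_k\to Q$, and the reduction of the possibly unbounded case to closed balls via the ball-wise criterion for injectivity — was already carried out there, resting in turn on Lemmas~\ref{Lem:A1} and~\ref{Lem:A2}.

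For the ``only if'' direction, suppose $Q$ is injective. Since the proposition restricts attention to non-empty $Q$ (and in any case injective spaces are never empty, as recorded in the Preliminaries), Lemma~\ref{Lem:A4} applies and produces $n$ pairs of $1$-Lipschitz maps $\underline{r}_i \le \bar{r}_i$ — with the convention that $\underline{r}_i=-\infty$ or $\bar{r}_i=\infty$ corresponds to dropping an inequality — such that $Q$ is exactly the indicated solution set. This yields the desired representation with at most $2n$ effective inequalities.

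I do not expect a genuine obstacle at this synthesis stage: the proposition is essentially a bookkeeping statement whose difficulty has been absorbed into the lemmas. The only point requiring care is to confirm that the hypotheses match at the interface — specifically that ``injective'' supplies the non-emptiness needed to enter the second case of Lemma~\ref{Lem:A3}, and that the $-\infty/\infty$ conventions are used consistently across Lemmas~\ref{Lem:A3} and~\ref{Lem:A4} so that ``dropping an inequality'' means the same thing in both directions. The substantive obstacles all lived earlier: the breakdown of the contraction argument at $\lambda=1$, circumvented by the compactness/Hausdorff-limit argument of Lemma~\ref{Lem:A3}, and the cone-separation estimate forcing $\underline{r}_i \le \bar{r}_i$ under the choice $a<1/8$ in Lemma~\ref{Lem:A4}.
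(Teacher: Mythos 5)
Your proposal is correct and matches the paper exactly: the paper itself gives no separate proof of Proposition~\ref{Prop:CharInjSub}, stating only that it ``summarizes the previous four lemmas,'' which is precisely your synthesis of Lemma~\ref{Lem:A3} (non-empty case) for the ``if'' direction and Lemma~\ref{Lem:A4} for the ``only if'' direction. Your attention to the interface conditions (non-emptiness of injective spaces, consistency of the $-\infty/\infty$ conventions) is exactly the right bookkeeping and introduces nothing beyond the paper's intended argument.
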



\addcontentsline{toc}{section}{References}

\bigskip\noindent
D.~Descombes ({\tt dominic.descombes@math.ethz.ch}),\\
Department of Mathematics, ETH Zurich, 8092 Zurich, Switzerland 


\end{document}